\theoremstyle{plain}
\newtheorem{thm}[equation]{Theorem}
\newtheorem{prop}[equation]{Proposition}
\newtheorem*{thm*}{Theorem}
\newtheorem*{prop*}{Proposition}
\newtheorem*{principle*}{Principle}
\theoremstyle{definition}
\newtheorem{defn}[equation]{Definition}
\newtheorem*{cor*}{Corollary}
\newtheorem*{lemma*}{Lemma}
\newtheorem*{defn*}{Definition}
\theoremstyle{remark}
\newtheorem{rem}[equation]{Remark}
\newtheorem{ex}[equation]{Example}
\newtheorem*{rem*}{Remark}
\newtheorem*{ex*}{Example} \numberwithin{equation}{section}
\tikzset{pf/.style={>=stealth,->,font=\scriptsize},
	surj/.style={->>},
	inj/.style={right hook->},
	bij/.style={above,sloped,inner sep=0.5pt},
	gl/.style={-,double},
	mat/.style={matrix of math nodes, row sep=2.5em, column sep = 2.5em, text height=1.5ex, text depth=0.25ex},
	dr/.style={matrix of math nodes, row sep=2.5em, column sep = 1.25em, text height=1.5ex, text depth=0.25ex},
	seq/.style={matrix of math nodes, row sep=2em, column sep = 2em, text height=1.5ex, text depth=0.25ex}}
\newenvironment{diag*}{\[\begin{tikzpicture}}{\end{tikzpicture}\]\ignorespacesafterend}
\newenvironment{diag}{\begin{equation}\begin{tikzpicture}[baseline=(current  bounding  box.center)]}{\end{tikzpicture}\end{equation}\ignorespacesafterend}
\title{Super Riemann Surfaces and the Super Conformal Action Functional}
\author{
	\texorpdfstring{
		Enno Keßler\thanks{kessler@mis.mpg.de}
	}
	{Enno Keßler}
}
\date{}
\DeclareMathOperator{\Ber}{Ber}
\DeclareMathOperator{\Div}{div}
\DeclareMathOperator{\Diff}{Diff}
\DeclareMathOperator{\End}{End}
\DeclareMathOperator{\GL}{GL}
\DeclareMathOperator{\OGL}{O}
\DeclareMathOperator{\SDiff}{SDiff}
\DeclareMathOperator{\UGL}{U}
\newcommand{\Top}[1]{{\|#1\|}}
\newcommand{\Smooth}[1]{{|#1|}}
\renewcommand{\d}{\mathop{}\!d}
\newcommand{\Dirac}{D \hspace{-2.7mm}\slash\hspace{0.75mm}}
\newcommand{\Lie}{L}
\newcommand{\pt}{pt}
\newcommand{\bbC}{\mathbb{C}}
\newcommand{\bbR}{\mathbb{R}}
\newcommand{\bbZ}{\mathbb{Z}}
\newcommand{\cD}{\mathcal{D}}
\newcommand{\cO}{\mathcal{O}}
\newcommand{\ModuliSp}{\mathcal{M}}
\newcommand{\Teichm}{\mathcal{T}}
\newcommand{\STeichm}{\mathcal{ST}}
\DeclareMathOperator{\SUSY}{SUSY}
\DeclareMathOperator{\susy}{susy}
\begin{document}
\maketitle
\begin{abstract}
	Riemann surfaces are two-dimensional manifolds with a conformal class of metrics.
	It is well known that the harmonic action functional and harmonic maps are tools to study the moduli space of Riemann surfaces.
	Super Riemann surfaces are an analogue of Riemann surfaces in the world of super geometry.
	After a short introduction to super differential geometry we will compare Riemann surfaces and super Riemann surfaces.
	We will see that super Riemann surfaces can be viewed as Riemann surfaces with an additional field, the gravitino.
	An extension of the harmonic action functional to super  Riemann surfaces is presented and applications to the moduli space of super Riemann surfaces are considered.
\end{abstract}

\addsec{Introduction}
The theory of Riemann surfaces is a very old and very interesting topic.
Since the end of the 19th century Riemann surfaces have been explored with different approaches from different areas of mathematics ranging from algebraic geometry to analysis.
In particular the description of Riemann surfaces in terms of conformal classes of metrics and the Teichmüller theory has an interesting connection to harmonic maps and the harmonic action functional (see, for example,~\cite{JJ-CRS}).
Harmonic maps from Riemann surfaces are particular non-linear sigma models.

In contrary, Super Riemann surfaces are a rather new topic.
They appeared in the context of super gravity and super string theory around 1985, see \cites{F-NSTTDCFT}{M-CDSTDSMSC}.
Super Riemann surfaces have been formalized using the language of super geometry (see e.g.~\cite{L-ITS}), an extension of differential or algebraic geometry.
Super Riemann surfaces are particular complex super manifolds of complex dimension \(1|1\).
Even though they possess one even and one odd dimension they are said to behave in certain regards as if they were one-dimensional.
Different approaches from the theory of Riemann surfaces have been “superized”, such as uniformization (\cite{CR-SRSUTT}) and universal deformation spaces (\cite{LBR-MSRS}).

The approach to super Riemann surfaces via super conformality and a super harmonic action functional is very interesting for physics, as it appears in a super symmetric non-linear sigma model.
In \cites{DZ-CASS}{BdVH-LSRIASS} it was proposed to consider a super symmetric extension of the harmonic action functional \(A(\varphi, g)\) where both the metric \(g\) and the field \(\varphi\) get a super partner, \(\psi\) and \(\chi\) respectively.
This particular super symmetric non-linear sigma model is relevant for string theory and super gravity.

It was conjectured that a super Riemann surface \(M\) can be described by a metric \(g\) and the super partner of the metric, called gravitino \(\chi\) on a two dimensional manifold \(\Smooth{M}\).
The action of the non-linear super symmetric sigma model would then be an integral over the super manifold \(M\), resembling the harmonic action functional on \(\Smooth{M}\).
Mathematically this leads to a different approach to the moduli space of super Riemann surfaces.
In a talk at the conference “Quantum Mathematical Physics” in fall 2014 I presented my research to make precise the relation between the super symmetric non-linear sigma model and super Riemann surfaces.
The present paper is a written up version of that talk.

The first chapter gives a brief introduction to the necessary parts of super geometry.
We focus mainly on the local theory, that is the building block \(\bbR^{m|n}\).
Motivation is given by a toy example.

In the second section we will see how super Riemann surfaces can be reduced to Riemann surfaces with an additional gravitino field.
A possible super Teichmüller theory is discussed.

In the third section, the extension of the classical harmonic action functional to super Riemann surfaces is given.
Using the results of the second section it is possible to formulate the super harmonic action functional as an integral over a two-dimensional manifold.
Symmetries of the action functional can be explained with the help of the geometry of super Riemann surfaces.
In analogy to the case of Riemann surfaces, it is expected that the super harmonic action functional may help to understand super Teichmüller space.

\subsection*{Acknowledgement}
I want to thank the organizers of the conference “Quantum Mathematical Physics“, Felix Finster, Jürgen Tolksdorf, and Eberhard Zeidler for inviting me to this most inspiring conference and for the opportunity to present my research.

The research leading to these results has received funding from the European Research Council under the European Union's Seventh Framework Programme (FP7/2007--2013) / ERC grant agreement nº 267087.
The research has been carried out at the Max-Planck-Institut für Mathematik in den Naturwissenschaften as a student of the International Max Planck Research School Mathematics in the Sciences.
I am grateful for the support received.
 
\section{Super geometry}

The theory of super manifolds was developed in the 1970s and 1980s in order to provide a geometrical framework for super symmetric theories in high-energy physics.
Already at that time two different approaches were developed.
One approach is to extend the definition of manifolds in terms of charts by replacing the real numbers by certain Grassmann algebras, see e.g.~\cite{R-GTS}.
The other approach is inspired by algebraic geometry.
It puts emphasis on functions rather than on points.
An early overview article for this approach is~\cite{L-ITS}.
It is proven in~\cite{B-TAS} that both approaches coincide.
We will use here the algebraic approach to super manifolds and certain generalizations given below.

\begin{ex}
	We will motivate and illustrate most definitions in this section by help of the following toy model, inspired by~\cite[§1.3]{DF-SS}.
	Let \(\varphi\) and \(\psi\) (classical) fields on \(\bbR\).
	The main motivation for super geometry is to unify \(\varphi\) and \(\psi\) into one object
	\begin{equation}\label{eq:ToyModelPhi}
		\Phi = \varphi + \eta \psi
	\end{equation}
	and to be able to interpret super symmetry, i.e.\ transformations of the following type
	\begin{align}\label{eq:ToyModelSUSY}
		\delta \varphi &= q\psi &
		\delta \psi &= q \partial_x \varphi
	\end{align}
	in a geometric way.
	To this end one needs to extend the geometrical setting from the domain \(\bbR\) to \(\bbR^{1|1}\), where the objects \(\Phi\), \(\eta\) and the super symmetry transformations get a precise meaning.
\end{ex}

Recall that a locally ringed space \(M\) is a pair \((\Top{M}, \cO_M)\), where \(\Top{M}\) is a topological space and \(\cO_M\) a sheaf of rings on \(M\), see~\cite[§0.4]{EGAI}.
Sections of \(\cO_M\) are called functions.
A homomorphism of ringed spaces \(\varphi\colon M\to N\) is a pair \(\varphi = (\Top{\varphi}, \varphi^\#)\) consisting of a homomorphism \(\Top{\varphi}\colon \Top{M}\to \Top {N}\) of the underlying topological spaces and a sheaf homomorphism \(\varphi^\#\colon \cO_N\to \cO_M\) over \(\varphi\).
\begin{defn}
	We denote by \(\bbR^{m|n}\) the ringed space given by the topological space \(\bbR^m\) together with the sheaf of functions
	\begin{equation}
		\cO_{\bbR^{m|n}} = C^\infty(\bbR^m, \bbR)\otimes \Lambda_n
	\end{equation}
	where \(\Lambda_n\) is a real Grassmann algebra in \(n\) generators.
	A super manifold \(M\) is a ringed space which is locally isomorphic to \(\bbR^{m|n}\).
	We say that \(M\) has \(m\) even and \(n\) odd dimensions, or that \(M\) is of dimension \(m|n\).
	A homomorphism of super manifolds \(\varphi\colon M\to N\) is a homomorphism of locally ringed spaces.
\end{defn}
Let \(x^1,\ldots, x^m\) be the standard coordinate functions on \(\bbR^m\) and \(\eta^1, \ldots, \eta^n\) generators of the Grassmann algebra \(\Lambda_n\).
We call the tuple \((X^A) = (x^a, \eta^\alpha)\) of functions on \(\bbR^{m|n}\) coordinates of \(\bbR^{m|n}\).
Any function \(f\in\cO_{\bbR^{m|n}}\) can be expressed as a finite expansion in the odd coordinates \(\eta^\alpha\):
\begin{equation}
	f = \sum_{\underline{\gamma}} \eta^{\underline{\gamma}} f_{\underline{\gamma}}(x) = f_0 + \eta^\alpha f_\alpha + \dots
\end{equation}
Here the summation runs over all odd multiindices \(\underline{\gamma}\).
The functions \(f_{\underline{\gamma}}\) are ordinary functions on \(\bbR^m\).

Notice that \(\cO_{\bbR^{m|n}}\) inherits a \(\bbZ_2\) grading from the Grassmann algebra \(\Lambda_n\).
We will call elements of \(\cO_{\bbR^{m|n}}\) of parity \(0\) even and elements of parity \(1\) odd.
We use here and in the following the convention, that small roman letters are used for even objects, small greek letters for odd objects and capital letters for even and odd objects together.

In contrast to the theory of manifolds, not every function \(f\in\cO_{\bbR^{m|n}}\) can be seen as a map \(\bbR^{m|n}\to \bbR\).
This is a consequence of the graduation of the structure sheaf \(\cO_{\bbR^{n|m}}\).
By~\cite[Theorem 2.17]{L-ITS}, maps between super domains \(U\subseteq\bbR^{m|n}\) and \(V\subseteq\bbR^{p|q}\) can be given in terms of coordinates.

\begin{ex}\label{ex:MapToR}
	A first possible interpretation for Equation~\eqref{eq:ToyModelPhi} would be that
	\begin{equation}
		\Phi = \varphi + \eta\psi
	\end{equation}
	is a function on \(\bbR^{1|1}\) with coordinates \((x,\eta)\).
	This would however restrict \(\varphi\) and \(\psi\) to be smooth functions on \(\bbR\).
	Even though it looks like a drawback at first sight, the correct way is to consider maps \(\Phi\colon \bbR^{1|1}\to \bbR\).
	Let \(r\) be a coordinate function on \(\bbR\).
	The map \(\Phi\) is then completely determined by the pullback \(\Phi^\# r\) which is an even function on \(\bbR^{1|1}\) because the ring homomorphisms \(\Phi^\#\) preserve automatically the \(\bbZ_2\)-parity of the functions:
	\begin{equation}
		\Phi^\# r = \varphi(x) + \eta \psi(x)
	\end{equation}
	However, if \(\Phi^\# r\) is even the function \(\psi(x)\) has to be zero.
	For the applications we have in mind \(\psi(x)\) is certainly expected to be non-zero.
	Therefore we need to consider a family of maps \(\Phi\) parametrized by a super manifold \(B\), i.e a map that makes the following diagram commutative:
	\begin{diag}
		\matrix[dr](m){
			\bbR^{1|1}\times B & & \bbR\times B\\
			& B & \\};
		\path[pf]
			(m-1-1) edge node[auto]{\(\Phi\)} (m-1-3)
				edge node[auto, swap]{\(p_B\)} (m-2-2)
			(m-1-3) edge node[auto]{\(p_B\)} (m-2-2);
	\end{diag}
	Such a map is again completely determined by the pullback \(\Phi^\# r\) which is this time an even function on \(\bbR^{1|1}\times B\):
	\begin{equation}
		\Phi^\# r = \varphi(x) + \eta \psi(x)
	\end{equation}
	Here the coefficients functions \(\varphi(x)\) and \(\psi(x)\) are functions on \(\bbR^{1|0}\times B\).
	We suppress the \(B\)-dependence in the notation.
	As \(\Phi^\# r\) is even, and \(\eta\) is odd, also \(\psi(x)\) has to be odd.
	This is possible if the base \(B\) possesses odd dimensions.
\end{ex}
The Example~\ref{ex:MapToR} motivates the following definition:
\begin{defn}[\cite{L-ITS}]
	A submersion \(p_M\colon M\to B\) of super manifolds is also called a family of super manifolds over \(B\).
	A morphism \(f\) of families of super manifolds from \(p_M\colon M\to B\) to \(p_N\colon N\to B\) is a morphism \(f\colon M\to N\) such that \(p_N\circ f = p_M\).
	Any super manifold is a family over \(\bbR^{0|0}=(\{\pt\}, \bbR)\).
	Any family is locally a projection \(\bbR^{m|n}\times B\to B\). We call \(m|n\) the dimension of the family.
\end{defn}
According to~\cite[Remark 2.6.(v)]{DM-SUSY} it is not necessary to fix \(B\).
However \(B\) is always supposed to be “big enough”, see Example~\ref{ex:MapToR}.
Henceforth, all super manifolds and maps of super manifolds are implicitly to be understood as families of super manifolds and morphisms of families of super manifolds.
In particular, also \(\bbR^{m|n}\) is to be understood as the trivial family \(\bbR^{m|n}\times B\).

\begin{rem}
	Another quite popular approach to super manifolds is to use the functor of points.
	Full discussions of this approach can be found in~\cite{S-GAASTS}.
	An advantage of this approach is that one can treat infinite dimensional super manifolds.
	Infinite dimensional manifolds can not be treated in the ringed-space approach.
	However non-trivial families of super manifolds are usually not in the scope of the functor of points approach.
	We will see in the next chapter, that we need non-trivial families of super manifolds for the study of moduli spaces.
\end{rem}

It is possible to extend a large part of differential geometry to super manifolds, see e.g.\ \cites{L-ITS}{DM-SUSY}{CCF-MFS}.
In particular there are appropriate definitions of vector bundles, tangent bundles, Lie groups and principle bundles.

\begin{ex}\label{ex:SVect}
	Let \((x^a, \eta^\alpha)\) be coordinates for \(\bbR^{m|n}\).
	Any vector field \(V\) on \(\bbR^{m|n}\) are \(\cO_{\bbR^{m|n}}\)-linear combination of the partial derivatives in coordinate directions:
	\begin{equation}
		V = V^a \partial_{x^a} + V^\alpha \partial_{\eta^\alpha}
	\end{equation}
	A particular vector field on \(\bbR^{1|1}\) is given by the even vector field \(Q=q\left(\partial_\eta - \eta\partial_x\right)\).
	It acts on the function \(\Phi^\# r\) by
	\begin{equation}
		Q \Phi^\# r = q\left(\partial_\eta - \eta \partial_x\right)\left(\varphi(x) + \eta\psi(x)\right) = q\psi(x) + \eta q\partial_x\varphi
	\end{equation}
	The coefficients of \(Q\Phi^\# r\) reproduce the super symmetry transformations from Equation~\eqref{eq:ToyModelSUSY}.
	Consequently, the infinitesimal super diffeomorphism given by the vector field \(Q\) can be identified with the super symmetry transformations~\eqref{eq:ToyModelSUSY}.
\end{ex}

In order to study the relation between super manifolds and ordinary manifolds, we need the concept of an underlying even manifold.
\begin{defn}[{\cite{JKT-SRSMG}}]
	Let \(M=(\Top{M},\cO_M)\) be a family of super manifolds of dimension \(m|n\) over \(B\).
	A family of super manifolds \(\Smooth{M}=(\Top{M},\cO_{\Smooth{M}})\) of dimension \(m|0\) together with an embedding of families of super manifolds \(i\colon \Smooth{M}\to M\) that is the identity on the underlying topological space is called an underlying even manifold.
\end{defn}
In~\cite{JKT-SRSMG} we have shown that such underlying even manifolds always exist.
They are unique, however, only if the odd dimension of \(B\) is zero.

\begin{ex}\label{ex:ToyModelFields}
	Remember that we have defined the fields \(\varphi(x)\) and \(\psi(x)\) as coefficients in the coordinate expansion of
	\begin{equation}
		\Phi^\# r = \varphi(x) + \eta \psi(x)
	\end{equation}
	with respect to fixed coordinates \((x,\eta)\), see Example~\ref{ex:MapToR}.
	This definition is clearly coordinate dependent.
	A general coordinate change on \(\bbR^{1|1}\) (over \(B\)) is given by
	\begin{align}
	\label{eq:R11CoordChange}
		x &= g_0(\tilde{x}) + \tilde{\eta} g_1(\tilde{x}) &
		\eta &= \gamma_0(\tilde{x}) + \tilde{\eta} \gamma_1(\tilde{x})
	\end{align}
	In the coordinates \((\tilde{x}, \tilde{\eta})\) the map \(\Phi\) is given by
	\begin{equation}
		\Phi^\# r = \varphi(g_0(\tilde{x})) + \gamma_0(\tilde{x})\psi(g_0(\tilde{x})) + \tilde{\eta}\left(\frac{\d \varphi}{\d x}(g_0(\tilde{x})) g_1(\tilde{x}) + \gamma_1(\tilde{x}) \psi(g_0(\tilde{x})) \right)
	\end{equation}
	With the help of a chosen embedding \(i\colon \bbR^{1|0}\times B \to \bbR^{1|1}\times B\) we are able to give a coordinate independent definition of \(\varphi\) and \(\psi\).
	Let \(y\) be the standard coordinate on \(\bbR^{1|0}\) and \((x,\eta)\) the standard coordinates on \(\bbR^{1|1}\).
	Any embedding \(i\) can be expressed in those coordinates as:
	\begin{align}
		i^\# x &= y & i^\#\eta = \xi
	\end{align}
	for some odd function \(\xi\) in \(\cO_{\bbR^{1|0}\times B}\).
	At this point it is obvious why the embedding is unique if \(B=\bbR^{0|0}\).

	The automorphism of \(\bbR^{1|1}\times B\) given by
	\begin{align}
		\tilde{x} &= x & \tilde{\eta} &= \eta - \xi
	\end{align}
	yields \(i^\#\tilde{\eta} = 0\).

	Define the field \(\varphi = \Phi\circ i\colon \bbR^{1|0}\to \bbR\).
	One can assume without loss of generality that the embedding \(i\) is given by \(i^\#\eta = 0\).
	Then the degree zero coefficient of \(\Phi^\#r\) coincides with \(\varphi\):
	\begin{equation}
		\Phi^\# r = \varphi(x) + \eta f_1(x)
	\end{equation}
	Note that the choice of \(i\) fixes only one component field.
	Any coordinate change
	\begin{align}
		x &= \tilde{x} + \tilde{\eta} g_1(\tilde{x}) &
		\eta &= \tilde{x} + \tilde{\eta} \gamma_1(\tilde{x})
	\end{align}
	preserves \(\varphi(x)\), but not \(\psi\).
	Any given embedding \(i\) splits all super diffeomorphisms of \(\bbR^{1|1}\) (Equation~\eqref{eq:ToyModelPhi}) into diffeomorphisms of \(\bbR^{1|0}\) (given by \(g_0\)), diffeomorphisms of \(\bbR^{1|1}\) that preserve \(i\) (given by \(g_1\) and \(\gamma_1\)), and diffeomorphisms of \(\bbR^{1|1}\) that change \(i\) (given by \(\gamma_0\)).

	We define the second component field \(\psi\) with the help of the vector field \(D=\partial_\eta + \eta\partial_x\).
	The vector field \(D\) is characterized by the property that it commutes with the super symmetry vector field \(Q\) given in~\ref{ex:SVect}.
	The definition \(\psi = i^*D\Phi\) then assures that \(\psi\) is the super partner to \(\varphi\), because the action of the vector field \(Q\) on the component fields is given by
	\begin{align}
		\delta \varphi &= i^*Q\Phi = q\psi &
		\delta \psi &= i^* Q D \Phi = q \partial_x \varphi
	\end{align}
	The definition of \(\psi\) given here shows that \(\psi\) is a section of \(\varphi^*T\bbR\) and is independent of the chosen coordinates.
	The vector field \(D\) encountered here is crucial for the definition of super Riemann surfaces in the next chapter.
	The particular structure of super Riemann surfaces will then also assure that, contrary to our toy model here, \(\psi\) is a spinor.
\end{ex}

Integrals over a super manifold \(M\) can be reduced to integrals over \(\Smooth{M}\) via an embedding \(i\colon \Smooth{M}\to M\).
Integration is defined for sections of \(\Ber T^\vee M\), a generalization of the determinant line bundle.
Integration is given in local coordinates \((x^a,\eta^\alpha)\) such that \(i^\#\eta^\alpha = 0\) by
\begin{equation}\label{eq:IntCoord}
	\int_{\bbR^{m|n}} g(x,\eta)[\d x^1\ldots\d x^m\d\eta^1\ldots\d\eta^n]  = \int_{\bbR^{m|0}} g_{top}(x) \d x^1\ldots\d x^m
\end{equation}
where \(g_{top}\) is the coefficient of \(\eta^1\cdot\cdots\cdot\eta^n\) in the coordinate expansion of \(g\).

\begin{ex}
	In our toy model, a super symmetric action for the fields \(\varphi\) and \(\psi\) is given by
	\begin{equation}
		A(\varphi, \psi) = \frac12 \int_\bbR \varphi'^2 + \psi \psi' \d{x}
	\end{equation}
	The action \(A(\varphi, \psi)\) can be formulated in terms of super symmetry via an integral over \(\bbR^{1|1}\) where the integrand depends on \(\Phi\) as follows:
	\begin{equation}
		A(\varphi, \psi) = A(\Phi) = -\frac12\int_{\bbR^{1|1}} \partial_x \Phi D \Phi [\d{x}\d\eta]
	\end{equation}
	Note that the reduction of the integral over \(\bbR^{1|1}\) to an integral over \(\bbR\) is given with respect to \(i\).
	However the definition of \(A(\Phi)\) does not depend on \(i\).
	Consequently the integral \(A(\Phi)\) has an additional symmetry, the change of embedding \(i\).
	An infinitesimal change of embedding \(i\) is given by the even vector field \(Q = q\left(\partial_\eta - \eta\partial_x\right)\).
	The super symmetry of \(A(\varphi, \psi)\) can thus be interpreted geometrically, in terms of a change of embedding of the underlying even manifold.
\end{ex}
 
\section{Super Riemann surfaces}

Super Riemann surfaces are \(2|2\)-dimensional super manifolds with additional structure.
They appeared in the 1980s in the context of string theory and super gravity.
Early references are \cites{F-NSTTDCFT}{M-CDSTDSMSC}{GN-GSRS}{LBR-MSRS}.
We will see in this section how they can be considered as a generalization of classical Riemann surfaces and give an outlook to a possible super Teichmüller theory.

Let us recall, that there are several different ways to define and study Riemann surfaces.
From the viewpoint of complex geometry, Riemann surfaces are \(1\)-dimensional complex manifolds.

In differential geometry one can describe Riemann surfaces as two-dimensional (real) manifolds with additional geometric structure, given by a conformal class of metrics or an almost complex structure.
Let \(\Smooth{M}\) be a two-dimensional smooth manifold of genus \(p\).
Let furthermore \(g\) and \(\tilde{g}\) be two Riemannian metrics on \(\Smooth{M}\).
Recall that the metrics \(g\) and \(\tilde{g}\) belong to the same conformal class \([g]\) if there is a positive function \(\Lambda\) such that \(g=\Lambda \tilde{g}\).
In two dimensions, a conformal class of metrics together with an orientation induces an almost complex structure \(I\) by
\begin{equation}
	g(IX, Y) = \d vol_g(X,Y)
\end{equation}
for all vector fields \(X\) and \(Y\).
It is also particular to the two-dimensional case that this almost complex structure is always integrable, i.e.\ leads to a complex manifold.

Let \(f\colon \Smooth{M}\to \Smooth{M}\) be a diffeomorphism.
The metric spaces \((\Smooth{M},g)\) and \((\Smooth{M}, f^*g)\) are isometric.
Consequently the resulting Riemann surfaces are isomorphic and isomorphism classes of Riemann surfaces are described by the quotient of conformal classes up to diffeomorphisms:
\begin{equation}
\label{eq:ModuliSpDiffGeo}
	\ModuliSp_p = \left\{\text{conformal classes } [g] \text{ on }\Smooth{M}\right\}/\Diff\Smooth{M}
\end{equation}
Unfortunately the isomorphism classes of Riemann surfaces cannot be endowed with a manifold structure.
However, an infinite cover of this space can be equipped with a manifold structure, the Teichmüller space:
\begin{equation}
\label{eq:Teichm}
	\Teichm_p = \left\{\text{conformal classes } [g] \text{ on }\Smooth{M}\right\}/\Diff_0\Smooth{M}
\end{equation}
Here \(\Diff_0\Smooth{M}\) denotes diffeomorphisms of \(\Smooth{M}\) that are homotopic to the identity.
It is a theorem due to Oswald Teichmüller, that the Teichmüller space \(\Teichm_p\) is isomorphic to \(\bbR^{6p-6}\).

Infinitesimal deformations of a given Riemann surface \((\Smooth{M},g)\) are tangent vectors to the appropriate point in Teichmüller space.
Since Riemann surfaces are described here in terms of Riemannian metrics, infinitesimal deformations of Riemann surfaces are given by infinitesimal changes \(\delta g\) of the metric \(g\).
Any infinitesimal change of the metric \(\delta g\) can be decomposed into infinitesimal conformal rescaling, Lie derivative of \(g\) (infinitesimal diffeomorphism) and “true infinitesimal deformations” of the Riemann surface:
\begin{equation}
\label{eq:InfDecompositionOfMetric}
	\delta g = \lambda g + \Lie_X g + D
\end{equation}
It can be shown that the true infinitesimal deformations \(D\) are holomorphic quadratic differentials, i.e.\ holomorphic sections of \(T^\vee\Smooth{M}\otimes_\bbC T^\vee\Smooth{M}\).

Super Riemann surfaces can also be described and studied with more algebraic or more differential geometric methods.
After a brief look at the algebraic definition of super Riemann surfaces and its consequences we will turn to a more differential geometric treatment of super Riemann surfaces.
We will see that the differential geometric picture allows to describe a super Riemann surface \(M\) in terms of a metric \(g\), a spinor bundle \(S\), and a gravitino field \(\chi\) on an underlying even manifold \(\Smooth{M}\).
This is a precise version of a conjecture to be found in \cites{dHP-GSP}{JJ-GP}.

We use here the  algebraic definition of super Riemann surfaces given in \cites{M-CDSTDSMSC}{LBR-MSRS}.
\begin{defn}
	A super Riemann surface is a \(1|1\)-dimensional complex super manifold \(M\) with a \(0|1\)-dimensional distribution \(\cD\subset TM\) such that the commutator of vector fields induces an isomorphism
	\begin{equation}
		\frac12 [\cdot, \cdot]\colon \cD\otimes_\bbC\cD \to TM/\cD.
	\end{equation}
\end{defn}
\begin{ex}\label{ex:SRS}
	Let \((z,\theta)\) be the standard coordinates on \(\bbC^{1|1}\) and define \(\cD\subset T\bbC^{1|1}\) by \(\cD=\langle\partial_\theta + \theta\partial_z\rangle\). The isomorphism \(\cD\otimes\cD \simeq TM/\cD\) is explicitly given by
\begin{equation}
		[\partial_\theta + \theta\partial_z, \partial_\theta + \theta\partial_z] = 2\partial_z
	\end{equation}
	This example is generic since any super Riemann surface is locally of this form, see~\cite[Lemma 1.2]{LBR-MSRS}.
\end{ex}
The following proposition is an easy consequence of this definition:
\begin{prop}[see e.g. {\cite[Proposition 4.2.2]{S-GAASTS}}]
\label{prop:BijectionSRSSpinCurves}
	There exists a bijection between the set of super Riemann surfaces over \(\bbR^{0|0}\) and the set of pairs \((\Smooth{M}, S)\), where \(S\) is a spinor bundle over the Riemann surface \(\Smooth{M}\), i.e. \(S\otimes_\bbC S = T\Smooth{M}\).
\end{prop}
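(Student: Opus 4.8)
The plan is to construct both directions of the correspondence explicitly and to verify they are mutually inverse. The key simplifying observation is that a super Riemann surface over \(\bbR^{0|0}\) has odd dimension one, so locally its structure sheaf is generated over \(\cO_{\Smooth{M}}\) by a single odd function; the nilpotent ideal \(\cI\subset\cO_M\) therefore satisfies \(\cI^2=0\), and \(M\) is automatically split, \(\cO_M\cong\cO_{\Smooth{M}}\oplus\cI\). This removes all obstruction-theoretic difficulties and lets me encode the super Riemann surface entirely by data on the Riemann surface \(\Smooth{M}\): the line bundle \(\cI\) and the distribution \(\cD\).

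First I would construct the map \(M\mapsto(\Smooth{M},S)\). Since the odd dimension of the base vanishes, there is a unique underlying even manifold \(i\colon\Smooth{M}\to M\) of dimension \(1|0\); as \(M\) is a complex super manifold, \(\Smooth{M}\) inherits the structure of a one-dimensional complex manifold, i.e.\ a Riemann surface. The distribution \(\cD\subset TM\) is a rank \(0|1\) subbundle, so its pullback \(i^*\cD\) along \(i\) is an odd holomorphic line bundle on \(\Smooth{M}\); let \(S\) be the associated even line bundle. Pulling the defining isomorphism \(\frac12[\cdot,\cdot]\colon\cD\otimes_\bbC\cD\to TM/\cD\) back along \(i\) yields an isomorphism \(S\otimes_\bbC S\cong i^*(TM/\cD)\), and since \(TM/\cD\) is the even rank \(1|0\) part of the tangent bundle, whose restriction along \(i\) is \(T\Smooth{M}\), I obtain \(S\otimes_\bbC S\cong T\Smooth{M}\). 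Thus \(S\) is a spinor bundle, with the local identification of frames read off from Example~\ref{ex:SRS}.

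For the inverse I would start from a pair \((\Smooth{M},S)\) and glue copies of the local model. Choose a holomorphic atlas with coordinates \(z_i\); the tangent bundle has transition functions \(\partial z_i/\partial z_j\), and the spinor condition \(S\otimes_\bbC S\cong T\Smooth{M}\) supplies transition functions \(s_{ij}\) with \(s_{ij}^2=\partial z_i/\partial z_j\). On each chart I take \(\bbC^{1|1}\) with coordinates \((z_i,\theta_i)\) and the standard distribution \(\cD=\langle\partial_{\theta_i}+\theta_i\partial_{z_i}\rangle\), and I glue by \(z_i=f_{ij}(z_j)\) and \(\theta_i=s_{ij}\theta_j\). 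The heart of the argument, and the main obstacle, is to show that precisely such coordinate changes preserve the standard distribution, and conversely that every coordinate change preserving \(\cD\) has this shape over \(\bbR^{0|0}\): concretely, a biholomorphism of the base together with a square root of its derivative. This is the statement that super conformal gluings are parametrized by spin data, and it is the one genuine computation; granting it, the local \(\cD\) glue to a global super conformal structure on the split super manifold \(M\) with \(\cO_M\cong\cO_{\Smooth{M}}\oplus S^\vee\).

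Finally I would check the two constructions invert one another up to isomorphism. Starting from \((\Smooth{M},S)\) and applying the first map recovers \(\Smooth{M}\) as the locus \(\theta_i=0\) and recovers \(S\) as the even line bundle underlying \(i^*\cD\), directly from the gluing \(\theta_i=s_{ij}\theta_j\). Conversely, starting from \(M\), splitness in odd dimension one identifies \(M\) with the split model built from \(\Smooth{M}\) and the line bundle \(\cI\cong S^\vee\), and the local normal form of Example~\ref{ex:SRS} identifies the given \(\cD\) with the glued distribution. Hence both composites are the identity, which establishes the bijection.
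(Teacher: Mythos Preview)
Your proposal is correct and follows essentially the same route as the paper: both arguments rest on the local normal form of Example~\ref{ex:SRS} and on the computation that a coordinate change preserving \(\cD=\langle\partial_\theta+\theta\partial_z\rangle\) over \(\bbR^{0|0}\) must have the shape \(\tilde z=f(z)\), \(\tilde\theta=g(z)\theta\) with \(f'(z)=g(z)^2\), so that the gluing data of \(M\) is exactly a Riemann surface together with a square root of \(T\Smooth{M}\). The only cosmetic difference is that the paper extracts \(S\) directly from the transition functions \(g(z)\), whereas you first define \(S\) invariantly as the even line bundle underlying \(i^*\cD\) and then invoke the same coordinate computation for the inverse and for the check that the two constructions are mutually inverse; your splitness remark is the reason the general coordinate change collapses to the form the paper writes down.
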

\begin{proof}
	As indicated in the Example~\ref{ex:SRS}, the super Riemann surface \(M\) can be covered by coordinate charts \((z,\theta)\) such that the holomorphic line bundle \(\cD\) is generated by \(\partial_\theta + \theta\partial_z\).
	Suppose \((z,\theta)\) and \((\tilde{z}, \tilde{\theta})\) are two pairs of such coordinates.
	In the formula for the holomorphic change of coordinates
	\begin{align}\label{eq:BijectionSRSSpinCurvesCoordChange}
		\tilde{z} &= f(z) & \tilde{\theta} &= g(z) \theta
	\end{align}
	the holomorphic functions \(f(z)\) and \(g(z)\) are related by the condition that \(\partial_{\tilde{\theta}} + \tilde{\theta}\partial_{z}\) must be proportional to \(\partial_\theta + \theta\partial_z\).
	One can check that
	\begin{equation}
		\partial_\theta + \theta\partial_z = g(z) \left(\partial_{\tilde{\theta}} + \tilde{\theta} \partial_{\tilde{z}}\right)
	\end{equation}
	if and only if \(f'(z) = {g(z)}^2\).
	As the unique underlying even manifold is given by \(\theta = 0\), the coordinates \(z\) induce a complex structure on \(\Smooth{M}\).
	The functions \(g(z)\) can be used as patching functions for a line bundle \(S\) such that \(S\otimes_\bbC S = T\Smooth{M}\).
	As explained before, a complex structure on a two-dimensional manifold corresponds to a conformal class of metrics \([g]\).
	It can be shown that complex line bundles \(S\) such that \(S\otimes_\bbC S = T\Smooth{M}\) are spinor bundles associated to a spin structure to any metric \(g\) in the conformal class.
\end{proof}

The Proposition~\ref{prop:BijectionSRSSpinCurves} shows that super Riemann surfaces over \(\bbR^{0|0}\) are in one to one correspondence to Riemann surfaces with spinor bundles.
For non-trivial families of super Riemann surfaces \(M\to B\) the proof of Proposition~\ref{prop:BijectionSRSSpinCurves} fails because the change of variables formula~\eqref{eq:BijectionSRSSpinCurvesCoordChange} can get more complicated in the presence of odd dimensions in the base \(B\) (see~\cite{CR-SRSUTT}).
We will see below that the additional information of a spinor valued differential form \(\chi\) is needed to describe non-trivial families of super Riemann surfaces.

It was furthermore shown in \cites{LBR-MSRS}[Theorem 8.4.4]{S-GAASTS} that there is a semi-universal family \(\mathcal{E}\to \STeichm_p\) of super Riemann surfaces of genus \(p\).
That is any family \(M\to B\) of super Riemann surfaces can be obtained in a non-unique way as a pullback of \(\mathcal{E}\) along a map \(B\to \STeichm_p\).
The base manifold \(\STeichm_p\) is a super manifold over \(\bbR^{0|0}\) of real dimension \(6p-6|4p-4\).
Proposition~\ref{prop:BijectionSRSSpinCurves} proves that the points of \(\Smooth{\STeichm_p}\), i.e.\ maps \(\bbR^{0|0}\to \STeichm_p\), are in one to one correspondence to Riemann surfaces with a chosen spinor bundle.
The super structure of \(\STeichm_p\) is encoded in non-trivial families of super Riemann surfaces.
In order to study non-trivial families of super Riemann surfaces we will turn to a more differential geometric description of super Riemann surfaces:

\begin{thm}[\cite{GN-GSRS}]\label{thm:SRSReductionOfStructureGroup}
	A super Riemann surface is a \(2|2\)-dimensional real super manifold with a reduction of the structure group of its frame bundle to
	\begin{equation}
		G = \left\{
			\begin{pmatrix}
				A^2 & B\\
				0 & A \\
			\end{pmatrix}
		\middle| A, B\in\bbC
		\right\} \subset \GL_\bbC(1|1) \subset \GL_\bbR(2|2)
	\end{equation}
	together with suitable integrability conditions.
	Remember that \(\bbC\) is to be understood as the trivial family \(\bbC\times B\).
\end{thm}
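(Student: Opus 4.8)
The plan is to match the two descriptions through their local frames. A $1|1$-dimensional complex super manifold is the same datum as a $2|2$-dimensional real super manifold equipped with an integrable almost complex structure, and such a structure is precisely a reduction of the real frame bundle from $\GL_\bbR(2|2)$ to $\GL_\bbC(1|1)$. Integrability of this almost complex structure is therefore the first of the ``suitable integrability conditions,'' and it remains to understand the further reduction from $\GL_\bbC(1|1)$ to $G$ that is forced by the distribution $\cD$ and the superconformal condition.

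For the forward direction I would introduce \emph{adapted frames}. Given a local odd generator $e_1$ of $\cD$, the isomorphism $\frac12[\cdot,\cdot]\colon\cD\otimes_\bbC\cD\to TM/\cD$ guarantees that $\frac12[e_1,e_1]$ is transverse to $\cD$; I call a complex frame $(e_0,e_1)$ adapted if $e_1$ generates $\cD$ and the even vector $e_0$ projects to $\frac12[e_1,e_1]$ in $TM/\cD$. Rescaling the generator by an even invertible function, $e_1\mapsto A e_1$, multiplies $\frac12[e_1,e_1]$ by $A^2$ modulo $\cD$, so an adapted $e_0$ must transform as $e_0\mapsto A^2 e_0 + B e_1$; here the odd coefficient $B$ is free precisely because $e_0$ is only pinned down modulo $\cD$. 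Hence the transition functions between adapted frames have diagonal part $(A^2,A)$ and a single free odd off-diagonal entry, i.e.\ they take values in $G$ (up to the standard transpose relating frame and coframe transitions). This produces the canonical $G$-structure of a super Riemann surface; the local model of Example~\ref{ex:SRS}, with $e_1=\partial_\theta+\theta\partial_z$ and $\frac12[e_1,e_1]=\partial_z$, together with the relation $f'=g^2$ from Proposition~\ref{prop:BijectionSRSSpinCurves}, is the integrated shadow of the $(A^2,A)$ pattern.

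For the converse I would begin with a $2|2$-dimensional real super manifold carrying a reduction to $G$. Since $G\subset\GL_\bbC(1|1)$, this induces an almost complex structure, and imposing its integrability yields a $1|1$-dimensional complex super manifold. The block-triangular shape of $G$ singles out the odd line spanned by the second frame vector, which I declare to be $\cD$; the relation between the even block $A^2$ and the odd block $A$ encodes an isomorphism $TM/\cD\cong\cD^{\otimes 2}$ of line bundles. What the bare $G$-structure does \emph{not} know is that this abstract isomorphism is realized by the commutator: the second integrability condition is the torsion constraint demanding that the Frobenius bracket $\frac12[\cdot,\cdot]\colon\cD\otimes_\bbC\cD\to TM/\cD$ be non-degenerate and compatible with the identification built into $G$. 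Under this constraint the bracket is an isomorphism, so $(M,\cD)$ satisfies the definition of a super Riemann surface, and by Example~\ref{ex:SRS} it is locally of the standard form $\cD=\langle\partial_\theta+\theta\partial_z\rangle$.

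The hard part is to make ``suitable integrability conditions'' precise as torsion constraints and to check that they split exactly into the two geometric requirements---integrability of the almost complex structure and non-degeneracy of the commutator pairing on $\cD$---with nothing extra hidden in the higher-order structure functions of the $G$-structure. One must also verify that the two assignments are mutually inverse: that recovering $\cD$ and the complex structure from the $G$-structure built out of a super Riemann surface returns the original data, and conversely. I expect the bookkeeping of the intrinsic torsion of the $G$-structure, carried out in the normal form of Example~\ref{ex:SRS}, to be the technical heart and the main obstacle of the argument.
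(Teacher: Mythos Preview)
The paper does not supply its own proof of this theorem: the statement is quoted from~\cite{GN-GSRS}, and the surrounding text only remarks that the integrability conditions there correspond to the physicists' torsion constraints, referring to \cite{GN-GSRS}, \cite{L-TCS}, and \cite{EK-UCSRS} for the actual arguments. So there is no in-paper proof to compare your proposal against.

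That said, your outline is the standard one and is on the right track. A super Riemann surface does yield a $G$-structure via adapted frames exactly as you describe: the relation $\frac12[Ae_1,Ae_1]\equiv A^2\cdot\frac12[e_1,e_1]\pmod{\cD}$ is what forces the $(A^2,A)$ diagonal, and the ambiguity of lifting $e_0$ from $TM/\cD$ to $TM$ accounts for the off-diagonal $B$. Your identification of the two integrability conditions---integrability of the induced almost complex structure, and the requirement that the Frobenius form $\cD\otimes_\bbC\cD\to TM/\cD$ realize the abstract isomorphism encoded by $G$---is also correct in spirit; in the references this is packaged as the vanishing of specific torsion components of a $G$-connection. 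The one place to be careful is the triangularity convention: with frames written as column vectors acted on from the left, your transformation $e_0'=A^2 e_0+Be_1$, $e_1'=Ae_1$ produces a \emph{lower}-triangular matrix, so the match with the stated $G$ depends on whether one records the action on frames or coframes. You flag this, but in a full write-up you should fix a convention and check it agrees with the one in \cite{GN-GSRS}.
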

Theorem~\ref{thm:SRSReductionOfStructureGroup} is also interesting for the physical motivation of super Riemann surfaces.
It was shown in~\cite{GN-GSRS} that the integrability conditions of Theorem~\ref{thm:SRSReductionOfStructureGroup} are related to the so-called “Torsion-constraints” that can be found in more physics-oriented papers as~\cites{DZ-CASS}{dHP-GSP}.
For a more recent approach via connections on super manifolds and their torsion consult~\cites{L-TCS}{EK-UCSRS}.

The Theorem~\ref{thm:SRSReductionOfStructureGroup} shows that it is impossible to describe the geometry of super Riemann surfaces as a super conformal class of super metrics.
Any orthogonal matrix that is upper triangular would indeed be diagonal.
Consequently \(\OGL(2|2)\nsubseteq G\), i.e.\ the choice of a super metric is not sufficient to determine a super Riemann surfaces.
However there are particular super metrics that are compatible with the structure of a super Riemann surface.
They are given by further reduction to \(\UGL(1)\) as follows
\begin{equation}
	\begin{split}
		\UGL(1)&\to G\\
		U &\mapsto
			\begin{pmatrix}
				U^2 & 0 \\
				0 & U \\
			\end{pmatrix}.
	\end{split}
\end{equation}
A further reduction of the structure group to \(\UGL(1)\) as above leads to a splitting of the following short exact sequence:
\begin{diag}\label{seq:SRSsplit}
	\matrix[seq](m) { 0 & \cD & TM=\cD^\perp\oplus \cD & TM/\cD & 0\\ };
	\path[pf]	(m-1-1) edge (m-1-2)
		(m-1-2) edge (m-1-3)
		(m-1-3) edge (m-1-4)
		(m-1-4) edge (m-1-5)
			edge[bend right=40] node[auto]{\(p\)} (m-1-3);
\end{diag}
Consider now an embedding of an underlying even manifold \(i\colon \Smooth{M}\to M\) for a fixed super Riemann surface \(M\).
Recall that the underlying even manifold \(\Smooth{M}\) of the super manifold \(M\) is a family of super manifolds of relative dimension~\(2|0\) over the base \(B\).
The pullback of the short exact sequence~\eqref{seq:SRSsplit} along an embedding \(i\)
\begin{diag}\label{eq:SRS_pullback_seq}
	\matrix[seq](m) { 0 & S & i^*TM & T\Smooth{M} & 0\\};
	\path[pf]	(m-1-1) edge (m-1-2)
		(m-1-2) edge (m-1-3)
		(m-1-3) edge (m-1-4)
		(m-1-4) edge (m-1-5)
			edge[bend right=40] node[auto]{\(\tilde{p}\)} (m-1-3)
			edge[bend left=40] node[auto]{\(Ti\)} (m-1-3);
\end{diag}
possesses a second splitting given by \(Ti\).
By the identification \(T\Smooth{M}=i^*\cD^\perp\), the tangent bundle of \(\Smooth{M}\) gets equipped with a metric \(g\).
The bundle \(S=i^*\cD\) is a spinor bundle of the metric \(g\) because \(i^*\cD\otimes_\bbC i^*\cD = i^*TM/\cD = T\Smooth{M}\).

The difference of the splittings \(\tilde{p}\) and \(Ti\) is a section of \(T^\vee\Smooth{M}\otimes S\) which we call gravitino \(\chi\).
\begin{equation}\label{eq:DefinitionGravitino}
	\chi(v) = p_S\left(\tilde{p} - Ti\right)v.
\end{equation}
Here \(p_S\colon i^*TM\to S\) is the projector given by the splitting of the short exact sequence by \(\tilde{p}\).

The construction given here associates to any super Riemann surface \(M\) with additional \(\UGL(1)\)-structure a triple \((g, S, \chi)\) that consists of a metric \(g\), a spinor bundle \(S\) and a gravitino field \(\chi\) on the underlying surface \(\Smooth{M}\).
Different choices of \(\UGL(1)\)-structure on the same super Riemann surface lead to metrics and gravitinos which differ from \(g\) and \(\chi\) only by a conformal and super Weyl transformation.
A super Weyl transformation is a transformation of the gravitino given by
\begin{align}
	\chi(v) \mapsto \chi(v) + \gamma(v) t
\end{align}
Here \(t\) is a section of \(S\) and \(\gamma\colon T\Smooth{M}\to \End(S)\) is Clifford multiplication and \(v\) a tangent vector field to \(\Smooth{M}\).

It is surprising that the metric \(g\), the spinor bundle \(S\) and the gravitino \(\chi\) contain full information about the super Riemann surface \(M\).
Indeed, it was shown in~\cite{JKT-SRSMG} that the super Riemann surface \(M\) and the embedding \(i\colon\Smooth{M}\to M\) can be reconstructed from the metric, the spinor bundle and the gravitino.
Thus there is a bijection
\begin{multline}
	\left\{i\colon \Smooth{M}\to M, M \text{ super Riemann surface}\right\} \\
	\longleftrightarrow \left\{ \Smooth{M}, S, g, \chi \right\} / \text{Weyl, SWeyl}.
\end{multline}

The metric \(g\) and the gravitino \(\chi\) do explicitly depend on the embedding~\(i\).
The normal bundle to the embedding \(i\colon\Smooth{M}\to M\) is \(S=i^*\cD\).
Thus an infinitesimal deformation of the embedding \(i\) is given by a section \(q\) of \(S\).
The resulting infinitesimal change of metric and gravitino is given by (c.f.~\cite{JKT-SRSMG}):
\begin{equation}\label{eq:MetricGravitinoSUSY}
	\begin{split}
		\delta f_a &= -2\langle\gamma^b q, \chi(f_a)\rangle f_b \\
		\delta \chi_a&= \nabla_{f_a}^S q = \nabla^{LC}_{f_a} q + \langle \gamma^b\chi_b, \chi_a\rangle \gamma^1\gamma^2q
	\end{split}
\end{equation}
Here the metric is expressed in terms of an orthonormal frame \(f_a\) and the gravitino in components \(\chi_a = \chi(f_a)\).
The spinor covariant derivative \(\nabla^{LC}\) is the Levi-Civita connection lifted to \(S\).
Equations~\eqref{eq:MetricGravitinoSUSY} are known as super symmetry transformations of the metric and gravitino and will be a symmetry of the action functional \(A(\varphi, g, \psi, \chi, F)\) below.
Furthermore one can show that it is possible to choose an embedding \(i\) such that the gravitino vanishes around a given point \(p\in\Smooth{M}\).
If \(M\) is a trivial family of super Riemann surfaces it is possible to choos an embedding \(i\) such that the gravitino vanishes on the whole of \(\Smooth{M}\).

Having a description of super Riemann surfaces in terms of metrics and gravitinos, it is natural to ask for a description of the super moduli space in terms of metrics and gravitinos.
Conjectures about such a super Teichmüller space can be found in the literature, see e.g.~\cites[Equation 3.85]{dHP-GSP}{JJ-GP}.
It is expected that there is a one-to-one correspondence
\begin{multline}
\label{eq:SuperModuliQuotient}
	\left\{M, M \text{ super Riemann surface}\right\}/\SDiff(M) \\
	\longleftrightarrow \\
	\left\{ \Smooth{M}, S, g, \chi\right\} / \text{Weyl, SWeyl,} \Diff(\Smooth{M}), \SUSY
\end{multline}
The group of super symmetry transformations \(\SUSY\) on the right hand side can probably be identified with the change of embedding \(i\).
A precise definition of \(\SUSY\) and the study of the full quotient must be left for further research.
A particularly interesting question is, how the quotient by \(\SUSY\) is related to the nonprojectedness of the super moduli space (see \cite{DW-SMNP}).

However it is possible to study an infinitesimal version of the quotient~\eqref{eq:SuperModuliQuotient}.
That is infinitesimal deformations of super Riemann surfaces can be studied in terms of infinitesimal deformations of metrics and gravitino.
Similar to Equation~\eqref{eq:InfDecompositionOfMetric} it is possible to decompose the infinitesimal deformations
\begin{align}
	\delta g &= \lambda g + \Lie_X g + \susy(q) + D \\
	\delta\chi &= \gamma t + \Lie_X \chi + \susy(q) + \mathfrak{D}
\end{align}
Here \(\susy(q)\) denotes the infinitesimal super symmetry transformations from Equation~\eqref{eq:MetricGravitinoSUSY}.
\(\lambda g\) is the infinitesimal Weyl transformation and \(\gamma t\) the infinitesimal super Weyl transformation.
It is possible to determine the free parameters \(\lambda\), \(t\), \(X\), and \(q\) such that the remaining “true deformations” \(D\) and \(\mathfrak{D}\) are holomorphic sections of \(T^\vee\Smooth{M}\otimes_\bbC T^\vee\Smooth{M}\) and \(S^\vee\otimes_\bbC S^\vee\otimes_\bbC S^\vee\) respectively.
More precisely one has the following:
\begin{thm}[{\cite{JKT-SRSMG}}]
	Let \(M\) be the super Riemann surface given by \(g\), \(S\) and \(\chi\) under the embedding \(i\colon\Smooth{M}\to M\).
	The infinitesimal deformations of \(M\) are given by
	\begin{equation}
		H^0(T^\vee\Smooth{M}\otimes_\bbC T^\vee\Smooth{M})\oplus H^0(S^\vee\otimes_\bbC S^\vee\otimes_\bbC S^\vee)
	\end{equation}
	Here \(H^0\) denotes holomorphic sections.
\end{thm}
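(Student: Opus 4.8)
The plan is to treat the pair \((\delta g,\delta\chi)\) of infinitesimal deformations as a single section of \(\Sym^2 T^\vee\Smooth{M}\oplus(T^\vee\Smooth{M}\otimes S)\), and to identify the true deformations \((D,\mathfrak D)\) of the two decomposition equations above with the cokernel of the combined gauge operator assembled from Weyl (\(\lambda\)), super Weyl (\(t\)), diffeomorphism (\(X\)) and super symmetry (\(q\)). First I would dispose of the pointwise (zeroth order) gauge directions. The Weyl term \(\lambda g\) spans the trace part of \(\delta g\), so after subtracting it \(D\) is trace-free; the super Weyl term \(\gamma t\), being Clifford multiplication by a spinor, spans the \(\gamma\)-trace part of \(\delta\chi\), so after subtracting it \(\mathfrak D\) is \(\gamma\)-trace-free. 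Using the complex structure on \(\Smooth{M}\) induced by \(g\), the trace-free symmetric tensors split into their \((2,0)\) and \((0,2)\) parts and the \(\gamma\)-trace-free vector spinors into spin \(\pm\tfrac32\) parts; this is what identifies the holomorphic representatives with sections of \(T^\vee\Smooth{M}\otimes_\bbC T^\vee\Smooth{M}\) and of \(S^\vee\otimes_\bbC S^\vee\otimes_\bbC S^\vee\).

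The core of the argument is the analysis of the remaining first order gauge operator \(\mathcal L\) sending a vector field \(X\) and a spinor \(q\) to
\begin{equation}
	\mathcal L(X,q) = \bigl(\,\Lie_X g+\susy(q)|_{g},\ \Lie_X\chi+\susy(q)|_{\chi}\,\bigr),
\end{equation}
projected to the trace-free and \(\gamma\)-trace-free parts, where \(\susy(q)|_{g}\) and \(\susy(q)|_{\chi}\) are the two lines of Equation~\eqref{eq:MetricGravitinoSUSY}. The assertion to prove is precisely that \(\Coker\mathcal L = H^0(T^\vee\Smooth{M}\otimes_\bbC T^\vee\Smooth{M})\oplus H^0(S^\vee\otimes_\bbC S^\vee\otimes_\bbC S^\vee)\). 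I would compute this cokernel by observing that \(\mathcal L\) is block triangular up to terms proportional to the background gravitino \(\chi\). On the diagonal stand the classical conformal Killing operator \(X\mapsto(\Lie_X g)_{0}\), whose \(L^2\)-cokernel is the space of holomorphic quadratic differentials — this is the content of the classical decomposition~\eqref{eq:InfDecompositionOfMetric} — and the twisted Dirac operator \(q\mapsto\nabla^{LC}q\), equivalently the \(\bar\partial\)-operator on \(S\), whose cokernel is by Serre duality and Riemann--Roch the space \(H^0(S^\vee\otimes_\bbC S^\vee\otimes_\bbC S^\vee)\). The off-diagonal blocks — the \(\chi\)-dependent contribution of \(\susy(q)\) to \(\delta g\), and the term \(\Lie_X\chi\) — are both proportional to \(\chi\).

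Since \(\chi\) is odd and hence nilpotent over the base \(B\), these off-diagonal blocks form a nilpotent perturbation of the diagonal elliptic operator, so they change neither the index nor the cokernel: working order by order in the odd coordinates of \(B\), at each order \(\mathcal L\) is the decoupled operator plus an already-determined lower order source, and the cokernel is represented by the harmonic (holomorphic) representatives of the decoupled operator. This shows that \((\lambda,t,X,q)\) can always be chosen so that \(D\) and \(\mathfrak D\) are holomorphic, and that the residual assignment of \((D,\mathfrak D)\) to a deformation is a bijection onto \(H^0(T^\vee\Smooth{M}\otimes_\bbC T^\vee\Smooth{M})\oplus H^0(S^\vee\otimes_\bbC S^\vee\otimes_\bbC S^\vee)\).

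I expect the main obstacle to be exactly the bookkeeping of this nilpotent coupling: one must verify that the gauge fixing of the gravitino (the choice of \(q\)) and that of the metric (the choices of \(\lambda\) and \(X\)) are mutually consistent, i.e.\ that solving the elliptic problem for \(\delta\chi\) does not reintroduce into \(\delta g\) a non-holomorphic piece that cannot be absorbed by \(\lambda\) and \(X\) — which is what the triangular structure guarantees — and that the nonlinear \(\chi^2\) corrections in Equation~\eqref{eq:MetricGravitinoSUSY} remain harmless at each order. The underlying elliptic input, namely surjectivity of the conformal Killing and Dirac operators onto the non-holomorphic parts (equivalently vanishing of the obstruction groups for genus \(p\geq 2\)), is classical, and the resulting dimension count \(3p-3\,|\,2p-2\) matches the real dimension \(6p-6\,|\,4p-4\) of \(\STeichm_p\) recorded above.
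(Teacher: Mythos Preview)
Your proposal is correct and follows exactly the approach the paper outlines: the paper does not give a detailed proof here but only sketches the decomposition \(\delta g=\lambda g+\Lie_X g+\susy(q)+D\), \(\delta\chi=\gamma t+\Lie_X\chi+\susy(q)+\mathfrak D\) and asserts that \(\lambda,t,X,q\) can be chosen so that \(D\) and \(\mathfrak D\) are holomorphic, deferring the details to~\cite{JKT-SRSMG}. Your write-up fills in precisely those details --- the pointwise removal of the trace and \(\gamma\)-trace, the block-triangular structure of the remaining first-order gauge operator with the conformal Killing and twistor operators on the diagonal, and the nilpotent-\(\chi\) perturbation argument --- so there is nothing to add beyond noting that the paper itself does not carry out the elliptic analysis you supply.
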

This result is well-known (see e.g.~\cite{S-GAASTS} and references therein).
However, the approach outlined here gives a much more geometrical description of the even and odd infinitesimal deformations as infinitesimal deformations of the metric and gravitino, respectively.
 
\section{Action functional}
In this chapter we are investigating a super symmetric extension of the harmonic action functional on Riemann surfaces.
This non-linear super symmetric sigma model can be formulated as an integral over a super Riemann surface and may help, like the harmonic action functional on Riemann surfaces, to understand the moduli space of super Riemann surfaces.

Let us first recall how the harmonic action functional on Riemann surfaces can be used as a tool to study the Teichmüller space.
Details can be found in~\cites{JJ-BS}{JJ-CRS}.
Let \(\varphi\colon\Smooth{M}\to N\) be a smooth map from the Riemann surface \((\Smooth{M}, g)\) to the Riemannian manifold \((N,n)\).
The harmonic action functional as a functional of the metric \(g\) and the map \(\varphi\) is given by
\begin{equation}
	\label{eq:HarmAF}
	A(g, \varphi) = \int_{\Smooth{M}} \|\d\varphi\|_{g^\vee\otimes \varphi^*n}^2 \d vol_g
\end{equation}
The maps \(\varphi\) which are critical points of \(A(g, \varphi)\) are called harmonic maps.

In the case of two dimensional domains, as considered here, the action is conformally invariant, i.e. \(A(\Lambda g, \varphi) = A(g, \varphi)\).
The action functional \(A(g,\varphi)\) can thus be considered as a functional on the conformal class of metrics.
Furthermore it is diffeomorphism invariant, i.e.\ for any diffeomorphism \(f\colon\Smooth{M}\to\Smooth{M}\)
\begin{equation}
	A(f^*g, \varphi\circ f) = A(g, \varphi)
\end{equation}
The harmonic action functional can thus be viewed as a functional on isomorphism classes of Riemann surfaces in the sense of Equation~\eqref{eq:ModuliSpDiffGeo} and also on the Teichmüller space, see Equation~\eqref{eq:Teichm}.

Define the energy-momentum-tensor as the variation of \(A(g, \varphi)\) with respect to the metric \(g\):
\begin{equation}
\label{eq:EnergyMomentumTensor}
	\delta_g A(g, \varphi) = \int_{\Smooth{M}}\delta g \cdot T \d vol_g
\end{equation}
For a harmonic map \(\varphi\) the energy-momentum-tensor is the Noether current that corresponds to diffeomorphism invariance.
Infinitesimal conformal rescalings \(\delta g = \lambda g\)  and Lie-derivatives \(\delta g = \Lie_X g\) must lie in the kernel of the variation \(\delta_g A\).
The vanishing of \(\delta_g A\) on infinitesimal conformal rescalings and Lie-derivatives is sufficient to show that the energy-momentum tensor \(T\) can be identified with a holomorphic quadratic differential.

The preceding facts are particularly interesting in the case where the codomain \((N, n)\) is also a Riemann surface.
Let us assume that the genus of Riemann surfaces \(\Smooth{M}\) and \(N\) is strictly larger than one.
It is possible to assume that the metrics \(g\) and \(n\) have constant curvature \(-1\).
If, furthermore, the Riemann surfaces \((\Smooth{M},g)\) and \((N,n)\) are of the same genus \(p\) there is a unique harmonic map \(\varphi\colon \Smooth{M}\to N\) homotopic to the identity (see~\cite[Corollary 3.10.1]{JJ-CRS}).
The energy-momentum-tensor \(T\) gives a map
\begin{equation}
\label{eq:TeichmMap}
	\Teichm_p \to \Gamma_\Smooth{M} (T^\vee\Smooth{M}\otimes_\bbC T^\vee\Smooth{M})
\end{equation}
sending the Riemann surface \((N,n)\) to the holomorphic quadratic differential \(T\) associated to \(\varphi\).
The Teichmüller theorem (\cite[Thm 4.2.2]{JJ-CRS}) states that the above map~\eqref{eq:TeichmMap} is a diffeomorphism.
The theorem of Riemann-Roch shows that the right-hand side of~\eqref{eq:TeichmMap}, is a finite-dimensional vector space isomorphic to \(\bbC^{3p-3}\simeq\bbR^{6p-6}\).

Summing up, we have seen that the harmonic action functional and harmonic maps help to prove fundamental results in the theory of Teichmüller space.
Furthermore, as is shown in~\cite{JJ-BS}, harmonic maps are also useful to study a quantized version of the harmonic action functional.
We now present the outline of a similar theory in the case of super Riemann surfaces.

Let \(M\) be a super Riemann surface with a fixed \(\UGL(1)\)-structure and local \(\UGL(1)\)-frames \(F_A\).
Let \(\Phi\colon M\to N\) be a map to an arbitrary Riemannian (super) manifold \((N,n)\).
The super symmetric extension of the harmonic action functional~\eqref{eq:HarmAF} is given by
\begin{equation}
\label{eq:SuperAF}
	A(M, \Phi) = \int_{M} \|\left.\d\Phi\right|_\cD\|^2 [\d vol] = \int_M \varepsilon^{\alpha\beta}\langle F_\alpha \Phi, F_\beta \Phi\rangle_{\Phi^*n} [F^1F^2F^3F^4]
\end{equation}
At a first glance this action functional looks just like the harmonic action functional~\eqref{eq:HarmAF}.
However notice that the norm of the differential \(\d\Phi\) restricted to \(\cD\) is used.
This difference to the harmonic action functional is crucial to show that the action functional~\eqref{eq:SuperAF} does only depend on the underlying \(G\)-structure and not on the chosen \(\UGL(1)\)-structure.
This analogue of conformal invariance, together with the super diffeomorphism invariance of \(A(M,\Phi)\) turns \(A(M, \Phi)\) into a functional on the moduli space of super Riemann surfaces as in the left-hand side of Equation~\eqref{eq:SuperModuliQuotient}.

The action functional~\eqref{eq:SuperAF} can be found at different places in the literature, see for example \cites{dHP-GSP}{GN-GSRS}.
In~\cite{GN-GSRS} the super conformal invariance of the action functional \(A(M,\Phi)\) is shown.

The maps \(\Phi\) that are critical with respect to \(A(M,\Phi)\) are described by a differential equation of second order:
\begin{equation}\label{eq:EL}
	0 = \Delta^\cD \Phi = \varepsilon^{\alpha\beta}\nabla_{F_\alpha} F_\beta\Phi + \varepsilon^{\alpha\beta}\left(\Div F_\alpha\right)F_\beta\Phi
\end{equation}
Analytical properties of the \(\cD\)-Laplace operator \(\Delta^\cD\), defined here, still need to be studied.
Remember that a detailed understanding of the analysis of harmonic maps is crucial for the definition of the Teichmüller map~\eqref{eq:TeichmMap}.
In~\cite{JKT-SRSMG} it was shown that the Equation~\eqref{eq:EL} can be used to derive equations of motion for the component fields defined below in Definition~\ref{defn:CompFields}.

Let now \(i\colon\Smooth{M}\to M\) be an underlying even manifold for \(M\).
We have seen in the last section that the super Riemann surface \(M\) can be described in terms of a metric field \(g\) and a gravitino field \(\chi\).
As explained in the first section, every integral over a super manifold can be reduced to an integral over the underlying even manifold.
Let us denote the resulting Lagrangian density on \(\Smooth{M}\) by \(\Smooth{L}\), i.e.:
\begin{equation}
	A(M, \Phi) = \int_{\Smooth{M}} \Smooth{L}
\end{equation}
Of course the Lagrangian density \(\Smooth{L}\) depends not only on \(g\) and \(\chi\), but also on \(\Phi\).
In order to express this dependence in a geometric way, we will now introduce component fields for \(\Phi\).

\begin{defn}
\label{defn:CompFields}
	Let \(\Phi\colon M\to N\) be a morphism and \(i\colon \Smooth{M}\to M\) be an underlying even manifold.
	We call the fields
	\begin{align}
		\varphi\colon \Smooth{M}&\to N & \psi\colon \Smooth{M}&\to S^\vee\otimes \varphi^*TN & F\colon \Smooth{M}&\to \varphi^*TN \\
		\varphi &= \Phi\circ i & \psi &= s^\alpha \otimes i^* F_\alpha\Phi & F &= i^*\Delta^\cD\Phi
	\end{align}
	component fields of \(\Phi\).
	The vectors \(s^\alpha\) form the dual basis to the basis \(s_\alpha = i^*F_\alpha\) of the spinor bundle \(S = i^*\cD\) on \(\Smooth{M}\).
\end{defn}

The component fields are sufficient to fully determine the map \(\Phi\).
There are particular coordinates \((x^a,\eta^\alpha)\) on \(M\) such that in the case \(N=\bbR\) the map \(\Phi\) can be written as
\begin{equation}
	\Phi^\#r = \varphi + \eta^\mu\psi_\mu + \eta^1\eta^2 F
\end{equation}
This expansion is similar to the Example~\ref{ex:ToyModelFields}.
It is an advantage of the geometric definition of the component fields \(\varphi\), \(\psi\) and \(F\), used here, to apply for arbitrary target manifolds \(N\).

With the help of the component maps \(\varphi\), \(\psi\) and \(F\), as well as \(g\) and \(\chi\), the Lagrangian density \(\Smooth{L}\) can be calculated explicitly.
For details on the rather long computations, see the forthcoming thesis~\cite{EK-DR}.
Note that the reduction of the action functional \(A(M,\Phi)\) to the action functional \(A(\varphi, g, \psi, \chi, F)\) given below was claimed in the literature almost 30~years ago (see, for example,~\cite{dHP-GSP}).
This reduction served as a major motivation for the introduction of super Riemann surfaces, as supposedly the action functional \(A(M, \Phi)\) would be easier to study than the component action \(A(\varphi, g, \psi, \chi, F)\).
However, no proof of the reduction could be found in the literature.
\begin{thm}
\label{thm:AF}
	Let \(M\) be a super Riemann surface and \(i\colon \Smooth{M}\to M\) an underlying even manifold.
	We denote by \(g\), \(\chi\), and \(g_S\) respectively the metric, gravitino, and spinor metric on \(\Smooth{M}\) induced by a given \(\UGL(1)\)-structure on \(M\).
	Let \(\Phi\colon M\to N\) be a morphism to a Riemannian super manifold \((N,n)\) and \(\varphi\), \(\psi\), and \(F\) its component fields, as introduced in Definition~\ref{defn:CompFields}.
	One obtains
	\begin{multline}\label{eq:AFRed}
		A(M, \Phi) = A(\varphi, g, \psi, \chi, F) = \int_{\Smooth{M}} \| \d\varphi\|^2_{g^\vee\otimes \varphi^*n} + \langle \psi, \Dirac\psi\rangle_{g_S^\vee\otimes \varphi^*n} - \frac14\langle F, F\rangle_{\varphi^*n} \\
		+ 2\langle \gamma^a\gamma^b\chi_a\partial_{x^b}\varphi,\psi\rangle_{g_S^\vee\otimes\varphi^*n} + \frac{1}{2}\langle\chi_a, \gamma^b\gamma^a\chi_b\rangle_{g_S}\langle\psi,\psi\rangle_{g_S^\vee\otimes\varphi^*n} \\
		+ \frac16\varepsilon^{\alpha\beta}\varepsilon^{\gamma\delta}\langle R^{\varphi^*TN}(\psi_\alpha, \psi_\gamma)\psi_\delta, \psi_\beta\rangle_{\varphi^*n} \d vol_g
	\end{multline}
\end{thm}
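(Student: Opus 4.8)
The plan is to evaluate the Berezin integral in~\eqref{eq:SuperAF} directly by reducing it to the underlying even manifold via the coordinate formula~\eqref{eq:IntCoord} and then matching the resulting top-degree coefficient term by term with the right-hand side of~\eqref{eq:AFRed}. First I would fix adapted data: coordinates \((x^a,\eta^\alpha)\) on \(M\) with \(i^\#\eta^\alpha = 0\), so that the embedding is the zero section and~\eqref{eq:IntCoord} applies, together with a local \(\UGL(1)\)-frame \(F_A = (F_a, F_\alpha)\). In such a frame the even part \(F_a\) encodes the metric \(g\) on \(\Smooth{M}=\{\eta = 0\}\), the odd part \(F_\alpha\) generates the distribution \(\cD\) with restriction \(s_\alpha = i^*F_\alpha\) trivializing the spinor bundle \(S\), and---crucially---the failure of the \(\UGL(1)\)-frame to be adapted to the embedding along the odd directions is precisely the gravitino \(\chi\), as in~\eqref{eq:DefinitionGravitino}. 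Converting the Berezinian density \([F^1F^2F^3F^4]\) to the coordinate density \([\d x^1\d x^2\d\eta^1\d\eta^2]\) produces the Jacobian factor \(\d vol_g\) together with lower-order corrections in \(\eta\) that will feed the gravitino couplings.

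Next I would Taylor-expand every factor of the integrand in the odd coordinates. The map contributes the component fields through its expansion \(\Phi^\#r = \varphi + \eta^\mu\psi_\mu + \eta^1\eta^2 F\) (in the case \(N=\bbR\), and covariantly via the pullback connection on \(\Phi^*TN\) in general), so that \(i^*F_\alpha\Phi = \psi_\alpha\), while the antisymmetrized second odd derivative reproduces \(F = i^*\Delta^\cD\Phi\) by the definition of the \(\cD\)-Laplacian~\eqref{eq:EL}. The target pairing \(\langle\cdot,\cdot\rangle_{\Phi^*n}\) must itself be expanded along \(\varphi\), its second-order term being governed by the Riemann tensor \(R^{\varphi^*TN}\). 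Here the super Riemann surface condition \(\tfrac12[F_\alpha,F_\beta]\in\cD^\perp\) is the organizing structural fact: it lets me re-express the symmetric part of \(F_\gamma F_\delta\Phi\) through the even derivative \(\partial_{x^b}\varphi\), while the antisymmetric part yields \(F\). Note that the naive leading term vanishes, since \(\varepsilon^{\alpha\beta}\langle\psi_\alpha,\psi_\beta\rangle = 0\) by antisymmetry, so every surviving contribution genuinely comes from the interplay of higher odd orders of the integrand with the Jacobian.

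Finally I would extract the coefficient of \(\eta^1\eta^2\) and identify the terms. The purely bosonic contribution gives \(\|\d\varphi\|^2_{g^\vee\otimes\varphi^*n}\); the term with one odd derivative on each factor produces the Dirac action \(\langle\psi,\Dirac\psi\rangle\), with \(\Dirac\) arising from the spinor lift of the Levi-Civita connection to \((S,g_S)\); the antisymmetric second-derivative term gives \(-\tfrac14\langle F,F\rangle\); the corrections from the non-holonomy of the frame and from the Jacobian produce the couplings \(2\langle\gamma^a\gamma^b\chi_a\partial_{x^b}\varphi,\psi\rangle\) and \(\tfrac12\langle\chi_a,\gamma^b\gamma^a\chi_b\rangle\langle\psi,\psi\rangle\); and the second-order term in the expansion of \(\Phi^*n\) yields the quartic curvature term \(\tfrac16\varepsilon^{\alpha\beta}\varepsilon^{\gamma\delta}\langle R^{\varphi^*TN}(\psi_\alpha,\psi_\gamma)\psi_\delta,\psi_\beta\rangle\).

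The main obstacle will be the covariant bookkeeping of this odd Taylor expansion: tracking how the odd derivatives distribute simultaneously over the frame \(F_\alpha\), the pullback connection, and the target metric, without sacrificing frame- and coordinate-independence. In particular, isolating the gravitino terms requires disentangling the non-holonomic structure functions of the \(\UGL(1)\)-frame (which carry \(\chi\)) from the connection coefficients, and obtaining the precise constants \(-\tfrac14\), \(2\), \(\tfrac12\), and especially \(\tfrac16\) in the curvature term demands strict control of sign conventions and of the antisymmetrizations \(\varepsilon^{\alpha\beta}\) throughout. These are the ``rather long computations'' referred to above, and I expect them to constitute the bulk of the work.
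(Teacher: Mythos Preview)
The paper does not actually prove this theorem. Immediately before the statement it says only that ``the Lagrangian density \(\Smooth{L}\) can be calculated explicitly. For details on the rather long computations, see the forthcoming thesis~\cite{EK-DR}.'' There is therefore no proof in the paper to compare your proposal against; the result is stated and the computation is deferred.

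That said, your outline is the natural route and is clearly the one the author has in mind: pick adapted coordinates with \(i^\#\eta^\alpha=0\), expand the integrand of~\eqref{eq:SuperAF} in \(\eta\), apply the Berezin reduction formula~\eqref{eq:IntCoord}, and identify the \(\eta^1\eta^2\)-coefficient term by term using the component fields of Definition~\ref{defn:CompFields} and the gravitino as the splitting difference~\eqref{eq:DefinitionGravitino}. The paper's own setup (the \(\UGL(1)\)-frame \(F_A\), the identification \(s_\alpha=i^*F_\alpha\), the expansion \(\Phi^\#r=\varphi+\eta^\mu\psi_\mu+\eta^1\eta^2 F\)) is exactly the scaffolding you describe, so your plan is consistent with what the author sketches.

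One small remark: your aside that the order-zero term \(\varepsilon^{\alpha\beta}\langle\psi_\alpha,\psi_\beta\rangle\) ``vanishes by antisymmetry'' is not really needed and is a bit delicate (it depends on parity and symmetry conventions for the spinor pairing). The honest reason that term does not contribute is simply that Berezin integration picks out only the top \(\eta^1\eta^2\)-coefficient, so the \(\eta^0\)-piece is irrelevant regardless of its value. This does not affect the correctness of your plan.
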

Notice that the symmetries of \(A(M,\Phi)\) translate into several symmetries for \(A(\varphi, g, \psi, \chi, F)\).
The \(G\)-invariance of the action \(A(M, \Phi)\) leads to conformal and super Weyl invariance of \(A(\varphi, g, \psi, \chi, F)\).
The invariance of \(A(M, \Phi)\) under super diffeomorphisms splits into diffeomorphism invariance and super symmetry of \(A(\varphi, g, \psi, \chi, F)\).
Super symmetry of \(A(\varphi, g, \psi, \chi, F)\) is the invariance up to first order under an infinitesimal change of the embedding \(i\) parametrized by the spinor \(q\).
The formulas for the super symmetry between \(g\) and \(\chi\) have been given in Equation~\eqref{eq:MetricGravitinoSUSY}.
A full calculation for the super symmetry of \(\varphi\), \(\psi\) and \(F\) can be found in~\cite{EK-DR}.
We give here the resulting formulas for the special case \(F=0\) and \(R^N = 0\):
\begin{align}
	\delta\varphi &= \langle q, \psi \rangle & \delta\psi &= \left(\partial_{x^k}\varphi - \langle\psi, \chi_k\rangle\right)\gamma^k q
\end{align}
We have thus given a super geometric explanation to all symmetries of the action functional \(A(\varphi, g, \psi, \chi, F)\) appearing in the literature (e.g. \cites{BdVH-LSRIASS}{DZ-CASS}).

We now turn to applications of the action functional \(A(M, \Phi)\) to the super moduli space or super Teichmüller space.
Similar to the case of Riemann surfaces and the harmonic action functional, the action functional \(A(M, \Phi)\) can be seen as a functional on the super moduli space.
By the left hand side of Equation~\eqref{eq:SuperModuliQuotient} the super moduli space is given by the integrable \(G\)-structures up to super diffeomorphisms.
The action functional~\eqref{eq:SuperAF} depends explicitly on the \(G\)-structure and is super diffeomorphism invariant.
Thus one may expect that the action functional~\ref{eq:SuperAF} and its critical points---the maps \(\Phi\colon M\to N\) solving Equation~\eqref{eq:EL}---may be useful to study the super moduli space.
However certain difficulties arise from the presence of integrability conditions in Theorem~\ref{thm:SRSReductionOfStructureGroup}.
Let \(H\) be an infinitesimal variation of the \(G\)-frame \(F_A\), i.e.\ first derivative of a family of frames
\begin{equation}
	{F(t)}_A = F_A + tH_A^B F_B + o(t).
\end{equation}
If the family of frames \({F(t)}_A\) is a family of integrable \(G\)-frames, certain infinitesimal integrability conditions hold for \(H\).
Consequently, the \(H\) that do not fulfil those infinitesimal integrability conditions are not admissible infinitesimal deformations of the super Riemann surface defined by \(F_A\).

The variation of the action functional~\eqref{eq:SuperAF} with respect to the variation of the frames \(F_A\) can be written as
\begin{equation}
	\delta_{F_A} A(M, \Phi) = \int_{M} H\cdot T^{super} [\d vol]
\end{equation}
The tensor \(T^{super}\) can be seen as a super version of the energy-momentum-tensor in Equation~\eqref{eq:EnergyMomentumTensor}.
However it is not guaranteed that non-integrable infinitesimal deformations \(H\) lie in the kernel of the variation \(\delta_{F_A} A(M, \Phi)\).
Thus, in contrast to the case of Riemann surfaces, \(\delta_{F_A} A(M, \Phi)\) can not be interpreted as a cotangent vector to the moduli space of super Riemann surfaces.

In order to circumvent the problem of integrability conditions one can turn to a description of the moduli space of super Riemann surfaces in terms of metrics and gravitinos, i.e.\ to the right hand side of Equation~\eqref{eq:SuperModuliQuotient}.
It is an advantage of the description of super Riemann surfaces in terms of metrics and gravitinos that there are no integrability conditions, i.e.\ every triple \((g, S, \chi)\) forms a super Riemann surface.
Thus every deformation of the given metric and given gravitino is an admissible deformation of the super Riemann surface at hand.
Define the energy-momentum tensor \(T\) of \(A(\varphi, g, \psi, \chi, F)\) via
\begin{equation}
	\delta_g A(\varphi, g, \psi, \chi, F) = \int_{\Smooth{M}} \delta g \cdot T \d{vol_g}
\end{equation}
and the super current \(J\) by
\begin{equation}
	\delta_\chi A(\varphi, g, \psi, \chi, F) = \int_{\Smooth{M}} \delta \chi \cdot J \d{vol_g}
\end{equation}
If the fields \(\varphi\), \(\psi\), and \(F\) are critical points of \(A(\varphi, g, \psi, \chi, F)\), then \(T\) is the Noether current with respect to diffeomorphism invariance, whereas \(J\) is the Noether current with respect to super symmetry.
It can be shown that \(T\) and \(J\) are components of \(T^{super}\) similar to \(\varphi\), \(\psi\) and \(F\) being components of \(\Phi\).
Thus, once again, diffeomorphism invariance and super symmetry are very much the same thing from the viewpoint of super geometry.
With the help of the diffeomorphism invariance and super symmetry of \(A(\varphi, g, \psi, \chi, F)\) one can show that \(T\) is, once again, a holomorphic quadratic differential and \(J\) a holomorphic section of \(S^\vee\otimes_\bbC S^\vee\otimes_\bbC S^\vee\).
They are even, resp.\ odd tangent vectors to the moduli space of super Riemann surfaces.

One can hope that the study of critical points of the action functional \(A(\varphi, g, \psi, \chi, F)\) turns out as useful for the study of the moduli space of super Riemann surfaces as the study of harmonic maps is for Teichmüller theory.
 
\printbibliography

\end{document}